\newtheoremstyle{satz_style}	
  {10pt}        
  {10pt}        
  {\itshape}    
  {}            
  {\bfseries}   
  {}		
  {.5em}	
  {\thmname{#1}\thmnumber{ #2}.\thmnote{ #3}}	
\newtheoremstyle{satz_style2}	
  {10pt}        
  {10pt}        
  {\itshape}    
  {}            
  {\bfseries}   
  {\newline}		
  {.5em}	
  {\thmname{#1}\thmnumber{ #2}.\thmnote{ #3}}	
\newtheoremstyle{dfn_style}	
  {10pt}	
  {10pt}	
  {}		
  {}		
  {\bfseries}	
  {}		
  {.5em}	
  {\thmname{#1}\thmnumber{ #2}.\thmnote{ #3}}	
\theoremstyle{satz_style}
\newtheorem{thm}{Satz}[section]
\newtheorem{proposition}[thm]{Proposition}
\newtheorem{lemma}[thm]{Lemma}
\theoremstyle{satz_style2}
\newtheorem{theorem}[thm]{Theorem}
\newtheorem{proposition2}[thm]{Proposition}
\theoremstyle{dfn_style}
\newtheorem{definition}[thm]{Definition}
\newtheorem{example}[thm]{Example}
\newtheorem{remark}[thm]{Remark}
\def \lw {\mathop{\rm Lw}\nolimits}
\def \lc {\mathop{\rm Lc}\nolimits}
\def \lm {\mathop{\rm Lm}\nolimits}
\def \LLex {\mathop{\rm LLex}\nolimits}
\title{Non-Commutative Gebauer-M\"oller Criteria}
\author{Martin Kreuzer\footnote{{martin.kreuzer@uni-passau.de} (Corresponding author)}\\
Fakult\"at f\"ur Informatik und Mathematik\\
Universit\"at Passau, D-94030 Passau, Germany\\
Xingqiang Xiu\footnote{{xiuxingqiang@gmail.com}}\\
College of Mathematics and Statistics\\
Hainan Normal University, Haikou 571158, P.R. China}
\date{}
\begin{document}
\maketitle

\begin{abstract}
For an efficient implementation of Buchberger's Algorithm, it is essential to avoid the treatment of as many unnecessary critical pairs or obstructions as possible. In the case of the commutative polynomial ring, this is achieved by the
Gebauer-M\"oller criteria. Here we present an adaptation of the Gebauer-M\"oller criteria for non-commutative polynomial
rings, i.e.\ for free associative algebras over fields. The essential idea is to detect unnecessary obstructions using
other obstructions with or without overlap. Experiments show that the new criteria are able to detect almost all
unnecessary obstructions during the execution of Buchberger's procedure.

{\bf Keywords:} Gr\"obner basis, free associative algebra, obstruction, Buchberger procedure

{\bf AMS classification:} 16-08, 20-04, 13P10
\end{abstract}

\section{Introduction}
\label{sec1}

Ever since B. Buchberger's thesis \cite{Bu65}, Gr\"obner bases have become a fundamental tool for computations in commutative algebra and algebraic geometry. The most time-consuming part in Buchberger's Algorithm is the computation of the normal remainder of an S-polynomial corresponding to a critical pair. Therefore a significant amount of energy has been spent on reducing the number of critical pairs which have to be treated. After the discovery of various criteria for discarding critical pairs ahead of time by B. Buchberger and H.M. M\"oller (see \cite{Bu79}, \cite{Bu85} and \cite{Mo85}), this subject found an initial resolution via the \emph{Gebauer-M\"oller installation} presented in \cite{GM88} which offers a good compromise between efficiency and the success rate for detecting unnecessary critical pairs.

A very different picture presents itself for Gr\"obner basis computations for two-sided ideals in non-commutative polynomial rings. The basic Gr\"obner basis theory in this case was described by G.H.\ Bergman (see~\cite{Be78}), T.\ Mora (see \cite{Mo86} and \cite{Mo94}) and others, and obstructions, the non-commutative analogue of critical pairs, were studied in \cite{Mo94}. However, since only a few authors endeavoured to implement efficient versions of Buchberger's Procedure for the non-commutative polynomial ring (i.e. the free associative algebra), the subject of minimizing the number of obstructions which have to be treated has received comparatively little attention, and merely a few rules were developed. For instance, the package
{\tt Plural} of the computer algebra system {\tt Singular} implements a version of the product and the chain criterion,
but not the multiply criterion or the leading word criterion. On the other hand, the system {\tt Magma} appears to be based on a variant of the
F4 Algorithm which does not use criteria for unnecessary obstructions.
For an overview on rules which have been developed see for instance~\cite{Co07}.

In this paper, we present generalizations of the Gebauer-M\"oller criteria for non-commutative polynomials. They cover not only the known cases of useless obstructions discussed in \cite{Mo94}, Lemma 5.11 and \cite{Co07}, but form a complete analogue of the results in the commutative case. One of the key ingredients we use for this purpose is the consideration of obstructions without overlaps. We detect useless obstructions, i.e.\ obstructions that can be represented by other obstructions,
using not only obstructions with overlaps but using also those without overlaps. We show that the consideration of obstructions without overlaps does not increase unnecessary computations, since a Gr\"obner representation is inherent in the S-polynomial of every obstruction without overlaps. Consequently, we reduce the number of obstructions efficiently and obtain a non-commutative version of the Gebauer-M\"oller criteria.

This paper is organised as follows. In Section~\ref{sec2} we recall the basic theory of Gr\"obner bases for
two-sided ideals in non-commutative polynomial rings. In particular, we introduce and study obstructions
(see Definitions~\ref{DefObs} and~\ref{DefNObs}, and Lemmas~\ref{sec2lem7} and~\ref{ProdCrit}),
present the Buchberger Criterion (see Proposition~\ref{BuCrit}), and formulate the Buchberger Procedure
(see Theorem~\ref{BuProc}). The non-commu\-ta\-tive analogues of the Gebauer-M\"oller criteria are developed
in Section~\ref{sec3}. They are based on a careful study of the set of newly constructed obstructions which are
produced during the execution of Buchberger's Procedure. As a result, we are able to formulate
the Non-Commutative Multiply Criterion (see Proposition~\ref{ncMCrit}),
the Non-Commutative Leading Word Criterion (see Proposition~\ref{ncLWCrit})
and the Non-Commutative Backward Criterion (see Proposition~\ref{ncBKCrit}). When we  combine these criteria,
the result is a new Improved Buchberger Procedure~\ref{sec3the14}.

The second author has implemented a version of the Buchberger Procedure for non-commutative polynomial rings
in a package for the computer algebra system {\tt ApCoCoA} which includes the non-commutative Gebauer-M\"oller
criteria developed here (see \cite{Ap10}). In the last section, we present experimental results about
the efficiency of the criteria for some cases of moderately difficult Gr\"obner basis computations.

Unless mentioned otherwise, we adhere to the definitions and terminology given in \cite{KR00} and \cite{KR05}.

\section{Gr\"obner Bases in $K\langle X\rangle$}
\label{sec2}

In the following we let $X=\{x_1,\dots,x_n\}$ be a finite set of indeterminates (or a finite alphabet), and $\langle X\rangle$ the monoid of all \emph{words} (or \emph{terms}) $x_{i_1}\cdots x_{i_l}$ where the multiplication is concatenation of words. The empty word will be denoted by $1$. Furthermore, let $K$ be a field, and let
$$K\langle X\rangle=\{c_1w_1+\cdots+c_sw_s\ |\ s\in\mathbb{N},c_i\in K\setminus\{0\},w_i\in\langle X\rangle\}$$
be the non-commutative polynomial ring generated by $X$ over $K$ (or the free associative $K$-algebra generated by $X$). We introduce basic notions of Gr\"obner basis theory in this setting.

\begin{definition}
A \emph{word ordering} on $\langle X\rangle$ is a well-ordering ${\sigma}$ which is compatible with multiplication, i.e. $w_1\geq_{\sigma} w_2$ implies $w_3w_1w_4\geq_{\sigma} w_3w_2w_4$ for all words $w_1,w_2,w_3,w_4\in\langle X\rangle$.
\end{definition}

In the commutative case, a word ordering is usually called a \emph{term ordering} or \emph{monomial ordering}. For instance, the \emph{length-lexicographic ordering} $\LLex$ is a word ordering. It first compares the length of two words and then breaks ties using the non-commutative lexicographic ordering with respect to $x_1>_{\LLex}\cdots>_{\LLex}x_n$. Note that the non-commutative lexicographic ordering by itself is not a word ordering, since it is neither a well-ordering nor compatible with multiplication.

\begin{definition}
Let ${\sigma}$ be a word ordering on $\langle X\rangle$.
\begin{itemize}
\item[(a)] Given a polynomial $f\in K\langle X\rangle\setminus\{0\}$, there exists a unique representation $f=c_1w_1+\cdots+c_sw_s$ with $c_1,\dots,c_s\in K\setminus\{0\}$ and $w_1,\dots,w_s\in\langle X\rangle$ such that $w_1>_{\sigma}\cdots>_{\sigma} w_s$. The word $\lw_{\sigma}(f)=w_1$ is called the \emph{leading word} of $f$ with respect to ${\sigma}$. The element $\lc_{\sigma}(f)=c_1$ is called the \emph{leading coefficient}. We let $\lm_{\sigma}(f)=c_1w_1$ and call it the \emph{leading monomial} of $f$.

\item[(b)] Let $I\subseteq K\langle X\rangle$ be a two-sided ideal. The set $\lw_{\sigma}\{I\}=\{\lw_{\sigma}(f)\ |\ f\in I\setminus\{0\}\}\subseteq\langle X\rangle$ is called the \emph{leading word set} of $I$. The two-sided ideal $\lw_{\sigma}(I)=\langle\lw_{\sigma}(f)\ |\ f\in I\setminus\{0\}\rangle\subseteq K\langle X\rangle$ is called the \emph{leading word ideal} of $I$.

\item[(c)] A subset $G$ of a two-sided ideal $I\subseteq K\langle X\rangle$ is called a \emph{${\sigma}$-Gr\"obner basis} of $I$ if the set of the leading words $\lw_{\sigma}\{G\}=\{\lw_{\sigma}(f)\ |\ f\in G\setminus\{0\}\}$ generates the leading word ideal $\lw_{\sigma}(I)$.
\end{itemize}
\end{definition}

In the following we focus on computations of Gr\"obner bases for two-sided ideals in $K\langle X\rangle$. For readers who want to know further properties and applications of non-commutative Gr\"obner bases, we refer to \cite{Mo94} and \cite{Xiu12}. Throughout this paper we assume that ${\sigma}$ is a word ordering on $\langle X\rangle$. The next algorithm is a central part of all Gr\"obner basis computations.

\begin{theorem}[(The Division Algorithm)]\label{DivAlg}
Let $f\in K\langle X\rangle$, $s\geq 1$, and $G=\{g_{1},\dots,g_{s}\}\subseteq K\langle X\rangle\setminus\{0\}$. Consider the following sequence of instructions.
\begin{itemize}
\item[{\rm (D1)}] Let $k_{1}=\cdots=k_{s}=0, p=0$, and $v=f$.

\item[{\rm (D2)}] Find the smallest index $i\in\{1,\dots,s\}$ such that $\lw_{\sigma}(v)=w\lw_{\sigma}(g_{i})w'$ for some words $w,w'\in\langle X\rangle$. If such an $i$ exists, increase $k_{i}$ by $1$, set $c_{ik_{i}}=\frac{\lc_{\sigma}(v)}{\lc_{\sigma}(g_{i})}, w_{ik_{i}}=w, w'_{ik_{i}}=w'$, and replace $v$ by $v-c_{ik_{i}}w_{ik_{i}}g_{i}w'_{ik_{i}}$.

\item[{\rm (D3)}] Repeat step {\rm (D2)} until there is no more $i\in\{1,\dots,s\}$ such that $\lw_{\sigma}(v)$ is a multiple of $\lw_{\sigma}(g_{i})$. If now $v\neq 0$, then replace $p$ by $p+\lm_{\sigma}(v)$ and $v$ by $v-\lm_{\sigma}(v)$, continue with step {\rm (D2)}.

\item[{\rm (D4)}] Return the tuples $(c_{11},w_{11},w'_{11}),\dots,(c_{sk_{s}},w_{sk_{s}},w'_{sk_{s}})$ and~$p$.
\end{itemize}
This is an algorithm which returns tuples $(c_{11},w_{11},w'_{11}),\dots,(c_{sk_{s}},w_{sk_{s}},w'_{sk_{s}})$ and a polynomial $p\in K\langle X\rangle$ such that the following conditions are satisfied.
\begin{itemize}
\item[{\rm (a)}] We have $f=\sum^{s}_{i=1}\sum^{k_{i}}_{j=1}c_{ij}w_{ij}g_{i}w'_{ij}+p$.

\item[{\rm (b)}] No element of ${\rm Supp}(p)$ is contained in $\langle\lw_{\sigma}(g_{1}),\dots,\lw_{\sigma}(g_{s})\rangle$.

\item[{\rm (c)}] For all $i\in\{1,\dots,s\}$ and all $j\in\{1,\dots,k_{i}\}$, we have $\lw_{\sigma}(w_{ij}g_{i}w'_{ij})\leq_{\sigma}\lw_{\sigma}(f)$. If $p\neq 0$, we have $\lw_{\sigma}(p)\leq_{\sigma}\lw_{\sigma}(f)$.

\item[{\rm (d)}] For all $i\in\{1,\dots, s\}$ and all $j\in\{1,\dots,k_{i}\}$, we have $\lw_{\sigma}(w_{ij}g_{i}w'_{ij})\notin\langle\lw_{\sigma}(g_{1}),\dots,\lw_{\sigma}(g_{i-1})\rangle$.
\end{itemize}
\end{theorem}

Note that the resulting tuples $(c_{11},w_{11},w'_{11}),\dots,(c_{sk_{s}},w_{sk_{s}},w'_{sk_{s}})$ and polynomial $p$ satisfying conditions (a)-(d) are \emph{not} unique. This is due to the fact that in step (D2) of the Division Algorithm there might exist more that one pair $(w,w')$ satisfying $\lw_{\sigma}(v)=w\lw_{\sigma}(g_{i})w'$ (see \cite{Xiu12}, Example 3.2.2). A polynomial $p\in K\langle X\rangle$ obtained in Theorem~\ref{DivAlg} is called a \emph{normal remainder} of $f$ with respect to $G$ and is denoted by ${\rm NR}_{\sigma,G}(f)$.

For $s\geq 1$, we let $F_s=(K\langle X\rangle\otimes_{K}K\langle X\rangle)^s$ be the free two-sided $K\langle X\rangle$-module of rank $s$ with the canonical basis $\{e_{1},\dots,e_{s}\}$, where $e_{i}=(0,\dots,0,1\otimes1,$ $0,\dots,0)$ with $1\otimes1$ occurring in the $i^{\rm th}$ position for $i=1,\dots,s$, and we let $\mathbb{T}(F_{s})$ be the set of terms in $F_{s}$, i.e. $\mathbb{T}(F_{s})=\{we_{i}w'\ |\ i\in\{1,\dots,s\}, w,w'\in\langle X\rangle\}$.

\begin{definition}\label{DefObs}
Let $G=\{g_1,\dots,g_s\}\subseteq K\langle X\rangle\setminus\{0\}$ with $s\geq 1$, and let $i,j\in\{1,\dots,s\}$.
\begin{itemize}
\item[(a)] If there exist some words $w_{i},w'_{i},w_{j},w'_{j}\in\langle X\rangle$ such that $w_{i}\lw_{\sigma}(g_{i})w'_{i}=w_{j}\lw_{\sigma}(g_{j})w'_{j}$, then we call the element
$${\rm o}_{i,j}(w_{i},w'_{i};w_{j},w'_{j})=\frac{1}{\lc_{\sigma}(g_{i})}w_{i}e_{i}w'_{i}-\frac{1}{\lc_{\sigma}(g_{j})}w_{j}e_{j}w'_{j}\in F_{s}$$
an \emph{obstruction} of $g_i$ and $g_j$ whenever it is non-zero. If $i=j$, it is called a \emph{self obstruction} of $g_{i}$. We will denote the \emph{set of all obstructions} of $g_{i}$ and $g_{j}$ by ${\rm Obs}(i,j)$.

\item[(b)] Let ${\rm o}_{i,j}(w_{i},w'_{i};w_{j},w'_{j})\in {\rm Obs}(i,j)$ be an obstruction of $g_{i}$ and $g_{j}$. The polynomial
$$S_{i,j}(w_{i},w'_{i};w_{j},w'_{j})=\frac{1}{\lc_{\sigma}(g_{i})}w_{i}g_{i}w'_{i}-\frac{1}{\lc_{\sigma}(g_{j})}w_{j}g_{j}w'_{j}\in K\langle X\rangle$$
is called the \emph{S-polynomial} of ${\rm o}_{i,j}(w_{i},w'_{i};w_{j},w'_{j})$.
\end{itemize}
\end{definition}

By definition, we have ${\rm o}_{j,i}(w_{j},w'_{j};w_{i},w'_{i})=-{\rm o}_{i,j}(w_{i},w'_{i};w_{j},w'_{j})$ and hence $S_{j,i}(w_{j},w'_{j};w_{i},w'_{i})=-S_{i,j}(w_{i},w'_{i};w_{j},w'_{j})$.

\begin{example}\label{ExObs}
Consider the non-commutative polynomial ring $\mathbb{Q}\langle x,y\rangle$ equipped with the word ordering $\LLex$ on $\langle x,y\rangle$ such that $x>_{\LLex}y$. Let $g_{1}=2x^{2}+yx$ and $g_{2}=xy+x$. Then $\lw_{\LLex}(g_{1})=x^2$ and $\lw_{\LLex}(g_{2})=xy$. We list some parts of self obstructions of $g_1$, obstructions of $g_1$ and $g_2$, and self obstructions of $g_2$ as follows.\\
Self obstructions of $g_1$:
\begin{eqnarray*}
&&\{\pm{\rm o}_{1,1}(w_1,xw_2;w_1x,w_2)\ |\ w_1,w_2\in\langle x,y\rangle\}\\
&&\cup\{\pm{\rm o}_{1,1}(1,w_3x^2;x^2w_3,1)\ |\ w_3\in\langle x,y\rangle\}
\end{eqnarray*}
Obstructions of $g_1$ and $g_2$:
\begin{eqnarray*}
&&\{{\rm o}_{1,2}(w_1,yw_2;w_1x,w_2)\ |\ w_1,w_2\in\langle x,y\rangle\}\\
&&\cup\{{\rm o}_{1,2}(1,w_3xy;x^2w_3,1)\ |\ w_3\in\langle x,y\rangle\}\\
&&\cup\{{\rm o}_{1,2}(xyw_4,1;1,w_4x^2)\ |\ w_3\in\langle x,y\rangle\}
\end{eqnarray*}
Self obstructions of $g_2$:
\begin{eqnarray*}
\{\pm{\rm o}_{2,2}(1,w_1xy;xyw_1,1)\ |\ w_1\in\langle x,y\rangle\}
\end{eqnarray*}
\end{example}

Using Definition \ref{DefObs}, we can characterize Gr\"obner bases in the following way.

\begin{proposition}\label{PropGB}
Let $G=\{g_1,\dots,g_s\}\subseteq K\langle X\rangle\setminus\{0\}$ be a set of polynomials which generate a two-sided ideal $I=\langle G\rangle\subseteq K\langle X\rangle$. Then the following conditions are equivalent.
\begin{itemize}
\item[{\rm (a)}] The set $G$ is a ${\sigma}$-Gr\"obner basis of $I$.

\item[{\rm (b)}] For every obstruction ${\rm o}_{i,j}(w_{i},w'_{i};w_{j},w'_{j})$ in the set $\bigcup_{1\leq i\leq j\leq s}{\rm Obs}(i,j)$, its S-polynomial $S_{i,j}(w_{i},w'_{i};w_{j},w'_{j})$ has a representation
$$S_{i,j}(w_{i},w'_{i};w_{j},w'_{j})=\sum^{\mu}_{k=1}c_{k}w_{k}g_{i_{k}}w'_{k}$$
with $c_{k}\in K, w_{k},w'_{k}\in\langle X\rangle$, and $g_{i_{k}}\in G$ for all $k\in\{1,\dots,\mu\}$ such that $\lw_{\sigma}(w_{j}g_{j}w'_{j})>_{\sigma}\lw_{\sigma}(w_{k}g_{i_{k}}w'_{k})$ if $c_k\neq 0$ for some $k\in\{1,\dots,\mu\}$.
\end{itemize}
\end{proposition}
\begin{proof}
See \cite{Mo94}, Subsection 5.3.
\end{proof}

A presentation of $S_{i,j}(w_{i},w'_{i};w_{j},w'_{j})$ as in Proposition~\ref{PropGB}.b is called a \emph{Gr\"obner representation} of $S_{i,j}(w_{i},w'_{i};w_{j},w'_{j})$ in terms of $G$.

Observe that there are infinitely many obstructions in each set ${\rm Obs}(i,j)$, due to the following two types of \emph{trivial} obstructions.

\begin{itemize}
\item[(T1)] If ${\rm o}_{i,j}(w_{i},w'_{i};w_{j},w'_{j})\in {\rm Obs}(i,j)$, then, for all $w,w'\in\langle X\rangle$ such that $ww'\neq 1$, we have ${\rm o}_{i,j}(ww_{i},w'_{i}w';ww_{j},w'_{j}w')\in{\rm Obs}(i,j)$. In this case, we say that ${\rm o}_{i,j}(ww_{i},w'_{i}w';ww_{j},w'_{j}w')$ is a \emph{proper multiple} of ${\rm o}_{i,j}(w_{i},w'_{i};w_{j},w'_{j})$.

\item[(T2)] For all $w\in\langle X\rangle$, we have ${\rm o}_{i,j}(\lw_{\sigma}(g_{j})w,1;1,w\lw_{\sigma}(g_{i})), {\rm o}_{i,j}(1, w\lw_{\sigma}(g_{j});$ $\lw_{\sigma}(g_{i})w,1)\in{\rm Obs}(i,j)$.
\end{itemize}

Before going on, let us get rid of these two types of trivial obstructions. The following lemma handles trivial obstructions of type (T1).

\begin{lemma}\label{sec2lem7}
If the S-polynomial of ${\rm o}_{i,j}(w_{i},w'_{i};w_{j},w'_{j})\in {\rm Obs}(i,j)$ has a Gr\"ob\-ner representation
in terms of $G$, then, for all $w,w'\in\langle X\rangle$ such that $ww'\neq 1$, the S-polynomial of
${\rm o}_{i,j}(ww_{i},w'_{i}w';ww_{j},w'_{j}w')$ also has a Gr\"obner representation in terms of $G$.
\end{lemma}
\begin{proof}
Without loss of generality, we assume that $S_{i,j}(w_{i},w'_{i};w_{j},w'_{j})$ is non-zero. We write $S_{i,j}(w_{i},w'_{i};w_{j},w'_{j})=\sum^{\mu}_{k=1}c_{k}w_{k}g_{i_{k}}w'_{k}$, where
$c_{k}\in K\setminus\{0\}$, $w_{k},w'_{k}\in\langle X\rangle$, and $g_{i_{k}}\in G$ such that
$\lw_{\sigma}(w_{j}g_{j}w'_{j})>_{\sigma}\lw_{\sigma}(w_{k}g_{i_{k}}w'_{k})$ for all $k\in\{1,\dots,\mu\}$.
For all $w,w'\in\langle X\rangle$, it is clear that $S_{i,j}(ww_{i},w'_{i}w';ww_{j},w'_{j}w')$
$=\sum^{\mu}_{k=1}c_{k}ww_{k}g_{i_{k}}w'_{k}w'$. Since the word ordering $\sigma$ is compatible with multiplication,
we have $w\lw_{\sigma}(w_{j}g_{j}w'_{j})w'>_{\sigma}w\lw_{\sigma}(w_{k}g_{i_{k}}w'_{k})w'$ for all $k\in\{1,\dots,\mu\}$.
Hence we have $\lw_{\sigma}(ww_{j}g_{j}w'_{j}w')>_{\sigma}\lw_{\sigma}(ww_{k}g_{i_{k}}w'_{k}w')$ for all $k\in\{1,\dots,\mu\}$ and $S_{i,j}(ww_{i},w'_{i}w';ww_{j},w'_{j}w')=\sum^{\mu}_{k=1}c_{k}ww_{k}g_{i_{k}}w'_{k}w'$ is a Gr\"obner representation in terms of $G$.
\end{proof}

To deal with trivial obstructions of type (T2), we introduce some terminology as follows.

\begin{definition}
Let $G=\{g_1,\dots,g_s\}\subseteq K\langle X\rangle\setminus\{0\}$ with $s\geq 1$.
\begin{itemize}
\item[(a)]
Let $w_{1},w_{2}\in\langle X\rangle$ be two words. If there exist some words $w,w',w''\in\langle X\rangle$ and $w\neq 1$ such that $w_{1}=w'w$ and $w_{2}=ww''$, or $w_{1}=ww'$ and $w_{2}=w''w$, or $w_{1}=w$ and $w_{2}=w'ww''$, or $w_{1}=w'ww''$ and $w_{2}=w$, then we say $w_{1}$ and $w_{2}$ have an \emph{overlap} at $w$. Otherwise, we say that $w_{1}$ and $w_{2}$ have \emph{no overlap}.
\item[(b)]
Let ${\rm o}_{i,j}(w_{i},w'_{i};w_{j},w'_{j})\in {\rm Obs}(i,j)$ be an obstruction. If $\lw_{\sigma}(g_{i})$ and $\lw_{\sigma}(g_{j})$ overlap at $w\in\langle X\rangle\setminus\{1\}$ in the word $w_{i}\lw_{\sigma}(g_{i})w'_{i}$, then we say that ${\rm o}_{i,j}(w_{i},w'_{i};w_{j},w'_{j})$ has an \emph{overlap} at $w$. Otherwise, we say that ${\rm o}_{i,j}(w_{i},w'_{i};w_{j},w'_{j})$ has \emph{no overlap}.
\end{itemize}
\end{definition}

\begin{example}
Consider Example \ref{ExObs} again.
\begin{itemize}
\item[{\rm (a)}]
The word $x^2$ has an overlap at $x$ with itself. The obstruction ${\rm o}_{1,1}(1,x;x,1)$ has an overlap at $x$ since $\lw_{\LLex}(g_1)=x^2$ overlaps at $x$ with itself in $1\cdot\lw_{\LLex}(g_1)\cdot x=x^3$. The obstruction ${\rm o}_{1,1}(1,w_3x^2;x^2w_3,1)$ with $w_3 $ in $\langle x,y\rangle$ has no overlap since $\lw_{\LLex}(g_1)$ does not overlap with itself in $1\cdot\lw_{\LLex}(g_1)\cdot w_3x^2=x^2w_3x^2$.
\item[{\rm (b)}]
The words $x^2$ and $xy$ have an overlap at $x$. The obstruction ${\rm o}_{1,2}(1,y;x,1)$ has an overlap at $x$ since $\lw_{\LLex}(g_1)=x^2$ and $\lw_{\LLex}(g_2)=xy$ overlap at $x$ in $1\cdot\lw_{\LLex}(g_1)\cdot y=x^2y$. The obstruction ${\rm o}_{1,2}(1,w_3xy;x^2w_3,1)$ with $w_3\in\langle x,y\rangle$ has no overlap since $\lw_{\LLex}(g_1)$ and $\lw_{\LLex}(g_2)$ does not overlap in $1\cdot\lw_{\LLex}(g_1)\cdot w_3xy=x^2w_3xy$. Similarly, the obstruction ${\rm o}_{1,2}(xyw_4,1;1,w_4x^2)$ with $w_4\in\langle x,y\rangle$ has no overlap either.
\item[{\rm (c)}]
The word $xy$ does not have any overlap with itself. Hence, the obstruction ${\rm o}_{2,2}(1,w_1xy;xyw_1,1)$ for any $w_1\in\langle x,y\rangle$ has no overlap.
\end{itemize}
\end{example}

Thus, as shown in (T2), there are infinitely many obstructions without overlaps in each ${\rm Obs}(i,j)$. The following lemma gets rid of these trivial obstructions.

\begin{lemma}\label{ProdCrit}
If ${\rm o}_{i,j}(w_{i},w'_{i};w_{j},w'_{j})\in {\rm Obs}(i,j)$ has no overlap, then $S_{i,j}(w_{i},w'_{i};$ $w_{j},w'_{j})$ has a Gr\"obner representation in terms of $G$.
\end{lemma}
\begin{proof}
See \cite{Mo94}, Lemma 5.4.
\end{proof}

Observe that Lemma~\ref{ProdCrit} is indeed a non-commutative version of the \emph{product criterion} (or \emph{criterion 2}) of Buchberger (cf. \cite{Bu85}).

\begin{definition}\label{DefNObs}
Let $G=\{g_1,\dots,g_s\}\subseteq K\langle X\rangle\setminus\{0\}$ with $s\geq 1$.
\begin{itemize}
\item[(a)] Let $i,j\in\{1,\dots,s\}$ and $i\neq j$. An obstruction in ${\rm Obs}(i,j)$ is called \emph{non-trivial} if it has an overlap and is of the form ${\rm o}_{i,j}(w_{i},1;1,w'_{j})$, or ${\rm o}_{i,j}(1,w'_{i};$ $w_{j},1)$, or ${\rm o}_{i,j}(w_{i},w'_{i};1,1)$, or ${\rm o}_{i,j}(1,1;w_{j},w'_{j})$ with $w_{i},w'_{i},w_{j},w'_{j}\in\langle X\rangle$.

\item[(b)] Let $i\in\{1,\dots,s\}$. A self obstruction in ${\rm Obs}(i,i)$ is called \emph{non-trivial} if it has an overlap and is of the form ${\rm o}_{i,i}(1,w'_{i};w_{i},1)$ with $w_{i},w'_{i}\in\langle X\rangle\setminus\{1\}$.

\item[(c)] Let $i,j\in\{1,\dots,s\}$. The \emph{set of all non-trivial obstructions} of $g_{i}$ and $g_{j}$ will be denoted by ${\rm NTObs}(i,j)$.
\end{itemize}
\end{definition}

Note that in Definition \ref{DefNObs}.b we only consider the form ${\rm o}_{i,i}(1,w'_{i};w_{i},1)$ due to the reason that ${\rm o}_{i,i}(w_{i},1;1,w'_{i})=-{\rm o}_{i,i}(1,w'_{i};w_{i},1)$.

\begin{example}
Consider Example \ref{ExObs} again. The non-trivial obstructions are as follows.
\begin{eqnarray*}
{\rm NTObs}(1,1)&=&\{{\rm o}_{1,1}(1,x;x,1)\}\\
{\rm NTObs}(1,2)&=&\{{\rm o}_{1,2}(1,y;x,1)\}\\
{\rm NTObs}(2,2)&=&\emptyset
\end{eqnarray*}
\end{example}

In the literature, a non-trivial obstruction of the form ${\rm o}_{i,j}(w_{i},1;1,w'_{j})$ is called a \emph{left obstruction}, a non-trivial obstruction of the form ${\rm o}_{i,j}(1,w'_{i};w_{j},1)$ is called a \emph{right obstruction}, and a non-trivial obstruction of the form ${\rm o}_{i,j}(w_{i},w'_{i};$ $1,1)$ or ${\rm o}_{i,j}(1,1;w_{j},w'_{j})$ is called a \emph{center obstruction}. We picture four types of obstructions as follows.

\begin{center}
\newcolumntype{C}{>{\centering\arraybackslash}m{0.5cm}<{}}
\begin{tabular}{*{3}{C}}
\hline
\multicolumn{1}{|c|}{$w_{i}$} & \multicolumn{2}{c|}{$\lw_{\sigma}(g_i)$}\\ \hline\hline
\multicolumn{2}{|c|}{$\lw_{\sigma}(g_j)$} & \multicolumn{1}{c|}{$w'_{j}$}\\ \hline
&\multicolumn{1}{c}{left   obstruction}&\\\hline
\multicolumn{1}{|c|}{$w_{i}$} & \multicolumn{1}{c|}{$\lw_{\sigma}(g_i)$} & \multicolumn{1}{c|}{$w'_{i}$}\\ \hline\hline
\multicolumn{3}{|c|}{$\lw_{\sigma}(g_j)$}\\ \hline
&\multicolumn{1}{c}{center obstruction}&
\end{tabular}\qquad
\begin{tabular}{*{3}{C}}
\hline
\multicolumn{2}{|c|}{$\lw_{\sigma}(g_i)$} & \multicolumn{1}{c|}{$w'_{i}$} \\ \hline\hline
\multicolumn{1}{|c|}{$w_{j}$} & \multicolumn{2}{c|}{$\lw_{\sigma}(g_j)$} \\ \hline
&\multicolumn{1}{c}{right  obstruction}&\\\hline
\multicolumn{3}{|c|}{$\lw_{\sigma}(g_i)$}\\\hline\hline
\multicolumn{1}{|c|}{$w_{j}$} & \multicolumn{1}{c|}{$\lw_{\sigma}(g_j)$} & \multicolumn{1}{c|}{$w'_{j}$}\\\hline
&\multicolumn{1}{c}{center obstruction}&
\end{tabular}
\end{center}

For the sake of convenience, in the  rest of the paper a \emph{trivial obstruction} we mean an obstruction without overlap or a proper multiple of a non-trivial obstruction.

At this point we can refine the characterization of Gr\"obner bases given in Proposition~\ref{PropGB} in the following way.

\begin{proposition2}[(Buchberger Criterion)]\label{BuCrit}
Let $G=\{g_1,\dots,g_s\}\subseteq K\langle X\rangle$ be a set of non-zero polynomials which generate a two-sided ideal $I=\langle G\rangle\subseteq K\langle X\rangle$. Then the set $G$ is a ${\sigma}$-Gr\"obner basis of $I$ if and only if, for each non-trivial obstruction ${\rm o}_{i,j}(w_{i},w'_{i};w_{j},w'_{j})\in\bigcup_{1\leq i\leq j\leq s}{\rm NTObs}(i,j)$, its S-polynomial $S_{i,j}(w_{i},w'_{i};w_{j},w'_{j})$ has a Gr\"obner representation in terms of $G$.
\end{proposition2}
\begin{proof}
This follows directly from Proposition~\ref{PropGB} and Lemmas~\ref{sec2lem7} and~\ref{ProdCrit}. In view of Lemma~\ref{ProdCrit}, it suffices to consider each obstruction with overlap, which is either a non-trivial obstruction or a proper multiple of a non-trivial obstruction. Further, Lemma~\ref{sec2lem7} treats a proper multiple of a non-trivial obstruction via the corresponding non-trivial one. Therefore, it is sufficient to consider only non-trivial obstructions.
\end{proof}

The Buchberger Criterion enables us to formulate the following procedure for computing Gr\"obner bases of two-sided ideals. Note that, in the procedure, by a \emph{fair strategy} we mean a selection strategy which ensures that every obstruction is selected eventually. Since these Gr\"obner bases need not be finite, we have to content ourselves with an enumerating procedure.

\begin{theorem}[(The Buchberger Procedure)]\label{BuProc}
Let $s\geq 1$, and let $G=\{g_1,\dots,g_s\}\subseteq K\langle X\rangle$ be a set of non-zero polynomials which generate a two-sided ideal $I=\langle G\rangle\subseteq K\langle X\rangle$. Consider the following sequence of instructions.
\begin{itemize}
\item[{\rm (B1)}] Let $B=\bigcup_{1\leq i\leq j\leq s}{\rm NTObs}(i,j)$.

\item[{\rm (B2)}] If $B=\emptyset$, return the result ${G}$. Otherwise, select an obstruction ${\rm o}_{i,j}(w_{i},w'_{i};$ $w_{j},w'_{j})\in B$ using a fair strategy and delete it from $B$.

\item[{\rm (B3)}] Compute the S-polynomial $S=S_{i,j}(w_{i},w'_{i};w_{j},w'_{j})$ and its normal remainder $S'={\rm NR}_{\sigma,G}(S)$. If $S'=0$, continue with step {\rm (B2)}.

\item[{\rm (B4)}] Increase $s$ by one, append $g_{s}=S'$ to the set ${G}$, and append the set of obstructions $\bigcup_{1\leq i\leq s}{\rm NTObs}(i,s)$ to the set $B$. Then continue with step~{\rm (B2)}.
\end{itemize}
This is a procedure that enumerates a ${\sigma}$-Gr\"obner basis ${G}$ of $I$. If $I$ has a finite ${\sigma}$-Gr\"obner basis, the procedure stops after finitely many steps and the resulting set ${G}$ is a finite ${\sigma}$-Gr\"obner basis of $I$.
\end{theorem}
\begin{proof}
Note that this is a straightforward generalization of the commutative version of Buchberger's algorithm to the non-commutative case. We refer to~\cite{Mo94} for the original form of this procedure and to \cite{Xiu12}, Theorem 4.1.14 for a detailed proof.
\end{proof}

\section{Non-Commutative Gebauer-M\"oller Criteria}
\label{sec3}

In this section we present non-commutative Gebauer-M\"oller criteria. They check whether an obstruction can be represented by ``smaller'' obstructions. If so, we declare such obstructions to be \emph{unnecessary}.

In the following, let $s\geq1$, and let $G=\{g_1,\dots,g_{s}\}\subseteq K\langle X\rangle\setminus\{0\}$ be a set of non-commutative polynomials. Recall that $F_s$ is the free two-sided $K\langle X\rangle$-module of rank $s$ and $\mathbb{T}(F_{s})=\{we_{i}w'\ |\ i\in\{1,\dots,s\}, w,w'\in\langle X\rangle\}$ is the set of terms in $F_{s}$. Before going into details, we define a certain well-ordering $\tau$ on $\mathbb{T}(F_{s})$ and use it to order obstructions.

\begin{definition}\label{DefMTOrd}
Let us define a relation $\tau$ on $\mathbb{T}(F_{s})$ as follows.
For two terms $w_{1}e_{i}w'_{1},w_{2}e_{j}w'_{2}\in\mathbb{T}(F_{s})$, we let $w_{1}e_{i}w'_{1}\geq_{\tau}w_{2}e_{j}w'_{2}$ if
\begin{itemize}
\item[(a)] $w_{1}\lw_{\sigma}(g_{i})w'_{1}>_{\sigma}w_{2}\lw_{\sigma}(g_{j})w'_{2}$, or

\item[(b)] $w_{1}\lw_{\sigma}(g_{i})w'_{1}=w_{2}\lw_{\sigma}(g_{j})w'_{2}$ and $i>j$, or

\item[(c)] $w_{1}\lw_{\sigma}(g_{i})w'_{1}=w_{2}\lw_{\sigma}(g_{j})w'_{2}$ and $i=j$ and $w_{1}\geq_{\sigma}w_{2}$.
\end{itemize}
One can check that $\tau$ is a well-ordering and is compatible with scalar multiplication. The relation $\tau$ is called the \emph{module term ordering induced by} $({\sigma},{G})$ on $\mathbb{T}(F_{s})$.
\end{definition}

By definition, for every obstruction ${\rm o}_{i,j}(w_{i},w'_{i};w_{j},w'_{j})\in\bigcup_{1\leq i\leq j\leq s}{\rm Obs}(i,j)$, we have $w_{i}e_{i}w'_{i}<_{\tau}w_{j}e_{j}w'_{j}$. We extend the ordering $\tau$ to the set of obstructions $\bigcup_{1\leq i\leq j\leq s}{\rm Obs}(i,j)$ by committing the following slight abuse of notation.

\begin{definition}\label{DefObsOrd}
Let $\tau$ be the module term ordering induced by $({\sigma},{G})$ on $\mathbb{T}(F_{s})$. Let ${\rm o}_{i,j}(w_{i},w'_{i};w_{j},w'_{j}), {\rm o}_{k,l}(w_{k},w'_{k};w_{l},w'_{l})$ be two obstructions in the set $\bigcup_{1\leq i\leq j\leq s}{\rm Obs}(i,j)$. If we have $w_{j}e_{j}w'_{j}>_{\tau}w_{l}e_{l}w'_{l}$, or if we have $w_{j}e_{j}w'_{j}=w_{l}e_{l}w'_{l}$ and $w_{i}e_{i}w'_{i}\geq_{\tau}w_{k}e_{k}w'_{k}$, then we let ${\rm o}_{i,j}(w_{i},w'_{i};w_{j},w'_{j})\geq_{\tau}{\rm o}_{k,l}(w_{k},w'_{k};$ $w_{l},w'_{l})$. The ordering $\tau$ is called the ordering \emph{induced by} $({\sigma},{G})$ on the set of obstructions $\bigcup_{1\leq i\leq j\leq s}{\rm Obs}(i,j)$.
\end{definition}

One can verify that $\tau$ is also a well-ordering on $\bigcup_{1\leq i\leq j\leq s}{\rm Obs}(i,j)$ and compatible with scalar multiplication. Recall that ${\rm o}_{j,i}(w_{j},w'_{j};w_{i},w'_{i})=-{\rm o}_{i,j}(w_{i},w'_{i};$ $w_{j},w'_{j})$, we can generalize the ordering $\tau$ to the set $\bigcup_{1\leq i,j\leq s}{\rm Obs}(i,j)$ by letting ${\rm o}_{j,i}(w_{j},w'_{j};w_{i},w'_{i})=_{\tau}{\rm o}_{i,j}(w_{i},w'_{i};w_{j},w'_{j})$.

Now we are ready to generalize the commutative Gebauer-M\"oller criteria (see \cite{CKR04} and \cite{GM88}) to the non-commutative case. Recall that, in step (B4) of the Buchberger Procedure, when a new generator $g_{s}$ is added, we immediately construct new obstructions $\bigcup_{1\leq i\leq s}{\rm NTObs}(i,s)$. We want to detect unnecessary obstructions in the set $\bigcup_{1\leq i\leq s}{\rm NTObs}(i,s)$ of newly constructed obstructions as well as in the set $\bigcup_{1\leq i\leq j\leq s-1}{\rm NTObs}(i,j)$ of previously constructed obstructions. We achieve this goal via the following three steps. Firstly, we detect unnecessary obstructions in the set $\bigcup_{1\leq i\leq s}{\rm NTObs}(i,s)$ with the aid of other obstructions also in this set. This step is called a \emph{head reduction step} in \cite{CKR04}. Secondly, we detect unnecessary obstructions in the set $\bigcup_{1\leq i\leq s}{\rm NTObs}(i,s)$ with the aid of obstructions in the set $\bigcup_{1\leq i\leq j\leq s-1}{\rm NTObs}(i,j)$. This step is called a \emph{tail reduction step} in \cite{CKR04}. Thirdly, we detect unnecessary obstructions in the set $\bigcup_{1\leq i\leq j\leq s-1}{\rm NTObs}(i,j)$ with the aid of the new generator $g_{s}$. Indeed, the first step corresponds to the commutative Gebauer-M\"oller criteria $M$ and $F$, and the last step corresponds to criterion $B_k$ (c.f. \cite{GM88}, Subsection 3.4).

The following definition proves quite helpful for presenting our idea.

\begin{definition}
A \emph{related pair of $s$-obstructions} is a pair of two distinct non-trivial obstructions ${\rm o}_{i,s}(w_{i},w'_{i};u_{s},u'_{s}), {\rm o}_{j,s}(w_{j},w'_{j};v_{s},v'_{s})\in\bigcup_{1\leq i\leq s}{\rm NTObs}(i,s)$ such that there exist two words
$w,w'\in\langle X\rangle$ satisfying $u_{s}=$ $wv_{s}$ and $u'_{s}=v'_{s}w'$.
\end{definition}

The following lemma is the key to implement the first step, that is, to detect unnecessary obstructions in the set $\bigcup_{1\leq i\leq s}{\rm NTObs}(i,s)$ of newly constructed obstructions via other obstructions in this set.

\begin{lemma}\label{LemMLW}
Let ${\rm o}_{i,s}(w_{i},w'_{i};u_{s},u'_{s})$ and ${\rm o}_{j,s}(w_{j},w'_{j};v_{s},v'_{s})$ be a related pair of $s$-obstructions with two words $w,w'\in\langle X\rangle$ satisfying $u_{s}=wv_{s}$ and $u'_{s}=v'_{s}w'$. Then we have
$$
{\rm o}_{i,s}(w_{i},w'_{i};u_{s},u'_{s})=w{\rm o}_{j,s}(w_{j},w'_{j};v_{s},v'_{s})w'+{\rm o}_{i,j}(w_{i},w'_{i};ww_{j},w'_{j}w').
$$
If the S-polynomials $S_{j,s}(w_{j},w'_{j};v_{s},v'_{s})$ and $S_{i,j}(w_{i},w'_{i};ww_{j},w'_{j}w')$ have Gr\"obner representations in terms of $G$, then so does $S_{i,s}(w_{i},w'_{i};u_{s},u'_{s})$. Moreover, the inequalities ${\rm o}_{i,s}(w_{i},w'_{i};u_{s},u'_{s})>_{\tau}{\rm o}_{j,s}(w_{j},w'_{j};v_{s},v'_{s})$ and ${\rm o}_{i,s}(w_{i},w'_{i};u_{s},u'_{s})>_{\tau}{\rm o}_{i,j}(w_{i},w'_{i};ww_{j},w'_{j}w')$ hold if one of the following conditions holds: (a) $ww'\neq 1$; (b) $i>j$; (c) $i=j$ and $ww'=1$ and $w_i>_{\sigma}w_j$.
\end{lemma}

\begin{proof}
The equality follows from Definition \ref{DefObs} and the conditions $u_{s}=wv_{s}$ and $u'_{s}=v'_{s}w'$. We show that, if $S_{j,s}(w_{j},w'_{j};v_{s},v'_{s})$ and $S_{i,j}(w_{i},w'_{i};ww_{j},w'_{j}w')$
have Gr\"obner representations in terms of $G$, then so does $S_{i,s}(w_{i},w'_{i};u_{s},u'_{s})$.
Clearly we have
$$
S_{i,s}(w_{i},w'_{i};u_{s},u'_{s})=wS_{j,s}(w_{j},w'_{j};v_{s},v'_{s})w'+S_{i,j}(w_{i},w'_{i};ww_{j},w'_{j}w').
$$
Without loss of generality, we assume that $S_{i,s}(w_{i},w'_{i};u_{s},u'_{s}), S_{j,s}(w_{j},w'_{j};v_{s},v'_{s})$
and $S_{i,j}(w_{i},w'_{i};ww_{j},w'_{j}w')$ are non-zero. Since there is a Gr\"obner representation for
$S_{j,s}(w_{j},w'_{j};v_{s},v'_{s})$, we have
$$
S_{j,s}(w_{j},w'_{j};v_{s},v'_{s})=\sum^\mu_{k=1}a_kw_kg_{i_k}w'_k
$$
with $a_k\in K\setminus\{0\},\ w_k,w'_k\in\langle X\rangle,\ g_{i_k}\in G$ for all
$k\in\{1,\dots,\mu\}$, such that
$\lw_{\sigma}(v_{s}g_{s}v'_{s})>_{\sigma}\lw_{\sigma}(a_kw_kg_{i_k}w'_k)$. Similarly, for $S_{i,j}(w_{i},w'_{i};ww_{j},w'_{j}w')$ we have
$$
S_{i,j}(w_{i},w'_{i};ww_{j},w'_{j}w')=\sum^\nu_{l=1}b_lw_lg_{i_l}w'_l
$$
with $b_l\in K\setminus\{0\},\ w_l,w'_l\in\langle X\rangle,\ g_{i_l}\in G$ for all
$l\in\{1,\dots,\nu\}$, such that $\lw_{\sigma}(ww_{j}g_{j}w'_{j}w')>_{\sigma}\lw_{\sigma}(b_lw_lg_{i_l}w'_l)$.
Therefore we have
\begin{eqnarray*}
S_{i,s}(w_{i},w'_{i};u_{s},u'_{s})&=&w(\sum^\mu_{k=1}a_kw_kg_{i_k}w'_k)w'+\sum^\nu_{l=1}b_lw_lg_{i_l}w'_l\\
&=&\sum^\mu_{k=1}a_kww_kg_{i_k}w'_kw'+\sum^\nu_{l=1}b_lw_lg_{i_l}w'_l.
\end{eqnarray*}
As $u_{s}\lw_{\sigma}(g_{s})u'_{s}=wv_{s}\lw_{\sigma}(g_{s})v'_{s}w'$, we have
$\lw_{\sigma}(u_{s}g_{s}u'_{s})=\lw_{\sigma}(wv_{s}g_{s}v'_{s}w')$ $>_{\sigma}\lw_{\sigma}(ww_kg_{i_k}w'_kw')$
for all $k\in\{1,\dots,\mu\}$. By Definition~\ref{DefObs}, we have
$\lw_{\sigma}(u_{s}g_{s}u'_{s})=\lw_{\sigma}(w_{i}g_iw'_{i})=\lw_{\sigma}(ww_{j}g_jw'_{j}w')>_{\sigma}\lw_{\sigma}(b_lw_lg_{i_l}w'_l)$ for all $l\in\{1,\dots,\nu\}$. Therefore
$$
S_{i,s}(w_{i},w'_{i};u_{s},u'_{s})=\sum^\mu_{k=1}a_kww_kg_{i_k}w'_kw'+\sum^\nu_{l=1}b_lw_lg_{i_l}w'_l
$$
is a Gr\"obner representation of $S_{i,s}(w_{i},w'_{i};u_{s},u'_{s})$.

Finally, the inequalities follow from the conditions $u_{s}=wv_{s}$ and $u'_{s}=v'_{s}w'$, and from Definitions \ref{DefMTOrd} and \ref{DefObsOrd}.
\end{proof}

The following example shows that the obstruction ${\rm o}_{i,j}(w_{i},w'_{i};ww_{j},w'_{j}w')$
in Lemma~\ref{LemMLW} can be trivial. Similar phenomena also occur in Lemmas~\ref{LemTR} and~\ref{LemBk} below.

\begin{example}\label{ExMC}
Consider polynomials $G=\{g_{1},g_{2},g_{3}\}$ in the non-commutative polynomial ring $K\langle x,y\rangle$.
\begin{itemize}
\item[(a)] Assume that $\lm_{\sigma}(g_{1})=(xy)^{2}, \lm_{\sigma}(g_{2})=y$ and
$\lm_{\sigma}(g_{3})=xyx^{2}y$. Then we have ${\rm o}_{1,3}(xyx,1;1,xy), {\rm o}_{2,3}(x,x^{2}y;1,1)\in\bigcup_{1\leq i\leq 3}{\rm NTObs}(i,3)$, and
$${\rm o}_{1,3}(xyx,1;1,xy)={\rm o}_{2,3}(x,x^{2}y;1,1)xy+{\rm o}_{1,2}(xyx,1;x,x^{2}yxy).$$
One can check that ${\rm o}_{1,2}(xyx,1;x,x^{2}yxy)$ is an obstruction without overlap.

\item[(b)] Now assume that $\lm_{\sigma}(g_{1})=y^3, \lm_{\sigma}(g_{2})=x^2y^{2}$ and
$\lm_{\sigma}(g_{3})=xyx^{2}y$. Then we have ${\rm o}_{1,3}(xyx^{2},1;1,y^{2}), {\rm o}_{2,3}(xy,1;1,y)\in\bigcup_{1\leq i\leq 3}{\rm NTObs}(i,3)$, and
$${\rm o}_{1,3}(xyx^{2},1;1,y^{2})={\rm o}_{2,3}(xy,1;1,y)y+{\rm o}_{1,2}(xyx^2,1;xy,y).$$
Observe that ${\rm o}_{1,2}(xyx^2,xy;y)=xy{\rm o}_{1,2}(x^2,1;1,y)$ is a proper multiple of
the non-trivial obstruction ${\rm o}_{1,2}(x^2,1;1,y)$.
\end{itemize}
\end{example}

In the following, we present the non-commutative multiply criterion and the leading word
criterion. They are non-commutative analogues of the Gebauer-M\"oller criteria M and F, respectively.

\begin{proposition2}[(Non-Commutative Multiply Criterion)]\label{ncMCrit}
Suppose that ${\rm o}_{i,s}(w_{i},w'_{i};u_{s},u'_{s})$ and ${\rm o}_{j,s}(w_{j},w'_{j};v_{s},v'_{s})$ are a related pair of $s$-obstructions with two words $w,w'\in\langle X\rangle$ satisfying $u_{s}=wv_{s}$ and $u'_{s}=v'_{s}w'$. Then we can remove the obstruction ${\rm o}_{i,s}(w_{i},w'_{i};u_{s},u'_{s})$ from $\bigcup_{1\leq i\leq s}{\rm NTObs}(i,s)$
in the execution of the Buchberger Procedure if $ww'\neq 1$.
\end{proposition2}

\begin{proof}
By the previous lemma, the obstruction ${\rm o}_{i,s}(w_{i},w'_{i};u_{s},u'_{s})$
can be represented as
$$
{\rm o}_{i,s}(w_{i},w'_{i};u_{s},u'_{s})=w{\rm o}_{j,s}(w_{j},w'_{j};v_{s},v'_{s})w'+{\rm o}_{i,j}(w_i,w'_{i};ww_{j},w'_{j}w').
$$
Observe that the condition $ww'\neq 1$ corresponds to condition (a) of Lemma~\ref{LemMLW}. By Lemma~\ref{LemMLW} and Proposition~\ref{BuCrit}, it suffices to show that $S_{j,s}(w_{j},w'_{j};v_{s},v'_{s})$ and $S_{i,j}(w_i,w'_{i};ww_{j},w'_{j}w')$ have
Gr\"obner representations in terms of $G$. Theorem~\ref{BuProc} ensures that
$S_{j,s}(w_{j},w'_{j};v_{s},v'_{s})$ has a Gr\"obner representation in terms of $G$.
Note that the obstruction ${\rm o}_{i,j}(w_i,w'_{i};ww_{j},w'_{j}w')$ can be an obstruction without overlap or a (proper) multiple of a non-trivial obstruction (for instance, see Example~\ref{ExMC}). If ${\rm o}_{i,j}(w_i,w'_{i};ww_{j},w'_{j}w')$ is an obstruction without overlap, then by Lemma~\ref{ProdCrit} its S-polynomial has a Gr\"obner representation in terms of~$G$. If ${\rm o}_{i,j}(w_i,w'_{i};ww_{j},w'_{j}w')$ is a multiple of a non-trivial obstruction,
then Lemma~\ref{sec2lem7} and Theorem~\ref{BuProc} guarantee that
$S_{i,j}(w_i,w'_{i};ww_{j},w'_{j}w')$ has a Gr\"obner representation in terms of $G$.
\end{proof}

\begin{proposition2}[(Non-Commutative Leading Word Criterion)]\label{ncLWCrit}
Suppose that ${\rm o}_{i,s}(w_{i},w'_{i};u_{s},u'_{s})$ and ${\rm o}_{j,s}(w_{j},w'_{j};v_{s},v'_{s})$ are a related pair of $s$-obstructions with two words $w,w'\in\langle X\rangle$ satisfying $u_{s}=wv_{s}$ and $u'_{s}=v'_{s}w'$. Then ${\rm o}_{i,s}(w_{i},w'_{i};u_{s},u'_{s})$ can be removed from
$\bigcup_{1\leq i\leq s}{\rm NTObs}(i,s)$ in the execution of the Buchberger Procedure
if one of the following conditions is satisfied: (a) $i>j$; (b) $i=j$ and $ww'=1$ and $w_{i}>_{\sigma}w_{j}$.
\end{proposition2}

\begin{proof}
Observe that the conditions (a) and (b) correspond to conditions (b) and (c) of Lemma~\ref{LemMLW}, respectively. The claim follows in the same way as in the proof of Proposition \ref{ncMCrit}.
\end{proof}

Next we work on detecting unnecessary obstructions in $\bigcup_{1\leq i\leq s}{\rm NTObs}(i,s)$
via obstructions in the set $\bigcup_{1\leq i\leq j\leq s-1}{\rm NTObs}(i,j)$ of previously constructed obstructions.

\begin{lemma}\label{LemTR}
Let ${\rm o}_{j,s}(u_{j},u'_{j};w_{s},w'_{s})$ and ${\rm o}_{i,j}(w_{i},w'_{i};v_{j},v'_{j})$ be non-trivial obstructions in
$\bigcup_{1\leq i\leq s}{\rm NTObs}(i,s)$ and $\bigcup_{1\leq i\leq j\leq s-1}{\rm NTObs}(i,j)$, respectively. If there exist two words
$w,w'\in\langle X\rangle$ such that $u_{j}=wv_{j}$ and $u'_{j}=v'_{j}w'$,
then we have
$$
{\rm o}_{j,s}(u_{j},u'_{j};w_{s},w'_{s})=-w{\rm o}_{i,j}(w_{i},w'_{i};v_{j},v'_{j})w'+
{\rm o}_{i,s}(ww_{i},w'_{i}w';w_{s},w'_{s})
$$
where the inequalities ${\rm o}_{j,s}(u_{j},u'_{j};w_{s},w'_{s})
>_{\tau}{\rm o}_{i,j}(w_{i},w'_{i};v_{j},v'_{j})$
and ${\rm o}_{j,s}(u_{j},u'_{j};$ $w_{s},w'_{s})
>_{\tau}{\rm o}_{i,s}(ww_{i},w'_{i}w';w_{s},w'_{s})$ hold.
Further, if the S-polynomials $S_{i,j}(w_{i},w'_{i};$ $v_{j},v'_{j})$ and
$S_{i,s}(ww_{i},w'_{i}w';w_{s},w'_{s})$ have Gr\"obner representations in terms of $G$,
then so does $S_{j,s}(u_{j},u'_{j};w_{s},w'_{s})$.
\end{lemma}

\begin{proof}
The claimed equality follows from Definition~\ref{DefObs} and from the conditions $u_{j}=wv_{j}$ and
$u'_{j}=v'_{j}w'$. We have ${\rm o}_{j,s}(u_{j},u'_{j}; w_{s},w'_{s})>_{\tau}{\rm o}_{i,j}(w_{i},w'_{i};v_{j},v'_{j})$ for $w_{s}e_{s}w'_{s}>_{\tau}u_{j}e_{j}u'_{j}=
wv_{j}e_{j}v'_{j}w\geq_{\tau}v_{j}e_{j}v'_{j}$. From the inequality $u_{j}e_{j}u'_{j}=
wv_{j}e_{j}v'_{j}w>_{\tau}ww_{i}e_{i}w'_{i}w'$, it follows that ${\rm o}_{j,s}(u_{j},u'_{j};w_{s},w'_{s})
>_{\tau}{\rm o}_{i,s}(ww_{i},w'_{i}w';$ $w_{s},w'_{s})$. Again, we can prove the second part
by following the same argument as in the proof of the second part of Lemma~\ref{LemMLW}.
\end{proof}

Note that the obstruction ${\rm o}_{i,s}(ww_{i},w'_{i}w';w_{s},w'_{s})$ in Lemma~\ref{LemTR}
can be an obstruction without overlap or a multiple of a non-trivial obstruction.
However, it suffices for us to consider only the former case, since the latter case has been
considered in Proposition~\ref{ncLWCrit} under its condition (a).

\begin{proposition2}[(Non-Commutative Tail Reduction)]\label{ncTR}
Suppose that ${\rm o}_{j,s}(u_{j},u'_{j};w_{s},w'_{s})$ and
${\rm o}_{i,j}(w_{i},w'_{i};v_{j},v'_{j})$
are two non-trivial obstructions as in Lemma \ref{LemTR}.
If the word $ww_{i}$ is a multiple of $w_{s}\lw_{\sigma}(g_{s})$, or if the word $w'_{i}w'$
is a multiple of $\lw_{\sigma}(g_{s})w'_{s}$, then ${\rm o}_{j,s}(u_{j},u'_{j};w_{s},w'_{s})$
can be removed from $\bigcup_{1\leq i\leq s}{\rm NTObs}(i,s)$ in the execution of the Buchberger Procedure.
\end{proposition2}

\begin{proof}
By Lemma~\ref{LemTR}, the obstruction ${\rm o}_{j,s}(u_{j},u'_{j};w_{s},w'_{s})$ can be represented as
$$
{\rm o}_{j,s}(u_{j},u'_{j};w_{s},w'_{s})=
-w{\rm o}_{i,j}(w_{i},w'_{i};v_{j},v'_{j})w'+{\rm o}_{i,s}(ww_{i},w'_{i}w';w_{s},w'_{s})
$$
By Lemma~\ref{LemTR} and Proposition~\ref{BuCrit}, it suffices to show that $S_{i,j}(w_{i},w'_{i};v_{j},v'_{j})$ and $S_{i,s}(ww_{i},w'_{i}w';w_{s},w'_{s})$ have
Gr\"obner representations in terms of $G$. Theorem~\ref{BuProc} ensures that
$S_{i,j}(w_{i},w'_{i};v_{j},v'_{j})$ has a Gr\"obner representation
in terms of $G$. Note that $ww_{i}$ is a multiple of $w_{s}\lw_{\sigma}(g_{s})$ or $w'_{i}w'$ is a multiple
of $\lw_{\sigma}(g_{s})w'_{s}$. This implies that ${\rm o}_{i,s}(ww_{i},$ $w'_{i}w';w_{s},w'_{s})$ has
no overlap. Then, by Lemma~\ref{ProdCrit}, $S_{i,s}(ww_{i},w'_{i}w';w_{s},w'_{s})$ has a Gr\"obner representation
in terms of $G$.
\end{proof}

\begin{example}\label{ExTR}
Consider polynomials $G=\{g_1,g_2,g_3\}$ in the non-commutative polynomial ring $K\langle x,y\rangle$. Assume that $\lm_{\sigma}(g_{1})=xy, \lm_{\sigma}(g_{2})=(xy)^2$ and $\lm_{\sigma}(g_{3})=xyx$. Then we have the following non-trivial obstructions.
\begin{eqnarray*}
{\rm NTOb}(1,1)&=&\emptyset\\
{\rm NTOb}(1,2)\cup{\rm NTOb}(2,2)&=&\{{\rm o}_{1,2}(xy,1;1,1),\ {\rm o}_{1,2}(1,xy;1,1),\\
&&\ {\rm o}_{2,2}(1,xy;xy,1)\}\\
{\rm NTOb}(1,3)\cup{\rm NTOb}(2,3)\cup{\rm NTOb}(3,3)&=&\{{\rm o}_{1,3}(1,x;1,1),\ {\rm o}_{1,3}(xy,1;1,y),\\
&&\ {\rm o}_{2,3}(1,1;1,y),\ {\rm o}_{2,3}(xy,1;1,yxy),\\
&&\ {\rm o}_{3,3}(1,yx;xy,1)\}
\end{eqnarray*}
The obstruction ${\rm o}_{2,3}(xy,1;1,yxy)$ can be detected by the Non-Commutative Tail Reduction, since
$${\rm o}_{2,3}(xy,1;1,yxy)=-xy{\rm o}_{1,2}(xy,1;1,1)+{\rm o}_{1,3}(xyxy,1;1,xyx)$$
and the obstruction ${\rm o}_{1,3}(xyxy,1;1,xyx)$ has no overlap.
\end{example}

\begin{remark}\label{RemTR}
Our experiments show that, after applying the previous two criteria, the Non-Commutative
Tail Reduction is unlikely to happen in the Buchberger Procedure. This may be due to the fact that, frequently, the obstruction ${\rm o}_{i,j}(w_{i},w'_{i};v_{j},v'_{j})$ in the equation of Lemma~\ref{LemTR} or Proposition~\ref{ncTR} had been removed by the Non-Commutative Multiply Criterion and the Non-Commutative Leading Word Criterion before the Non-Commutative Tail Reduction is applied. Consider Example \ref{ExTR} again. It is easy to check that the obstruction ${\rm o}_{1,2}(xy,1;1,1)$ can be detected as an unnecessary obstruction by the Non-Commutative Leading Word Criterion with the help of ${\rm o}_{1,2}(1,xy;1,1)$.
\end{remark}

So far we have detected unnecessary obstructions in the set $\bigcup_{1\leq i\leq s}{\rm O}(i,s)$ of newly constructed obstructions. Intuitively, we are also able to detect unnecessary obstructions in the set $\bigcup_{1\leq i\leq j\leq s-1}{\rm Obs}(i,j)$ of previously constructed obstructions. Thus, in the last step, we detect unnecessary obstructions in this set by using the new generator $g_{s}$.

\begin{lemma}\label{LemBk}
Let  ${\rm o}_{i,j}(w_{i},w'_{i};w_{j},w'_{j})\in\bigcup_{1\leq i\leq j\leq s-1}{\rm NTObs}(i,j)$ be a non-trivial obstruction. If there are two words $w,w'\in\langle X\rangle$ satisfying $w_{j}\lw_{\sigma}(g_{j})w'_{j}=w\lw_{\sigma}(g_{s})w'$, then we can represent ${\rm o}_{i,j}(w_{i},w'_{i};w_{j},w'_{j})$ as
$${\rm o}_{i,j}(w_{i},w'_{i};w_{j},w'_{j})={\rm o}_{i,s}(w_{i},w'_{i};w,w')-{\rm o}_{j,s}(w_{j},w'_{j};w,w').$$
Moreover, if $S_{i,s}(w_{i},w'_{i};w,w')$ and $S_{j,s}(w_{j},w'_{j};w,w')$ have Gr\"obner representations in terms of $G$, then so does $S_{i,j}(w_{i},w'_{i};w_{j},w'_{j})$.
\end{lemma}

\begin{proof}
The equality  follows from Definition~\ref{DefObs} and from the
condition that $w_{j}\lw_{\sigma}(g_{j})w'_{j}=w\lw_{\sigma}(g_{s})w'$.
The proof of the second part is analogous to the proof of the second part of Lemma~\ref{LemMLW}.
\end{proof}

The following example shows that the obstruction ${\rm o}_{i,s}(w_{i},w'_{i};w,w')$ in the equation of Lemma~\ref{LemBk} can be an obstruction without overlap or a (proper) multiple of a non-trivial obstruction. In the case that ${\rm o}_{i,s}(w_{i},w'_{i};w,w')$ is a multiple of a non-trivial obstruction, say ${\rm o}_{i,s}(\tilde{w}_{i},\tilde{w}'_{i};\tilde{w},\tilde{w}')$, the example shows that it is not necessary to have ${\rm o}_{i,s}(w_{i},w'_{i};w,w')>_{\tau}{\rm o}_{i,s}(\tilde{w}_{i},\tilde{w}'_{i};\tilde{w},\tilde{w}')$ (compared to Lemmas~\ref{LemMLW} and~\ref{LemTR}). The same also holds for the obstruction ${\rm o}_{j,s}(w_{j},w'_{j};w,w')$ in the equation of Lemma~\ref{LemBk}.

\begin{example}
Consider polynomials $G=\{g_{1},g_{2},g_{3}\}$ in the non-commutative polynomial ring $K\langle x,y\rangle$ with $\lm_{\sigma}(g_{1})=x^3yx, \lm_{\sigma}(g_{2})=x^2$ and $\lm_{\sigma}(g_{3})=x$. We have ${\rm o}_{1,2}(1,1;x,yx)\in\bigcup_{1\leq i\leq j\leq 2}{\rm NTObs}(i,j)$ and $x\lw_{\sigma}(g_{2})yx=x^3yx=x^3y\lw_{\sigma}(g_{3})$ and
$${\rm o}_{1,2}(1,1;x,yx)={\rm o}_{1,3}(1,1;x^3y,1)-{\rm o}_{2,3}(x,yx;x^3y,1).$$
One can check that ${\rm o}_{1,3}(1,1;x^3y,1)$ is a non-trivial obstruction in ${\rm NTObs}(1,3)$ and ${\rm o}_{1,2}(1,1;x,yx)<_{\tau}{\rm o}_{1,3}(1,1;x^3y,1)$. Moreover, ${\rm o}_{2,3}(x,yx;x^3y,1)$ is an obstruction without overlap.
\end{example}

The following is a non-commutative analogue of the Gebauer-M\"oller criterian $B_k$, which is also known as the \emph{chain criterion} (or \emph{criterion 1}) of Buchberger (cf. \cite{Bu85}).

\begin{proposition2}[(Non-Commutative Backward Criterion)]\label{ncBKCrit}
Suppose that ${\rm o}_{i,j}(w_{i},w'_{i};w_{j},w'_{j})\in\bigcup_{1\leq i\leq j\leq s-1}{\rm NTObs}(i,j)$ is a non-trivial obstruction. Then in the execution of the Buchberger Procedure ${\rm o}_{i,j}(w_{i},w'_{i};w_{j},w'_{j})$ can be removed from $\bigcup_{1\leq i\leq j\leq s-1}{\rm NTObs}(i,j)$ if the following three conditions are satisfied.
\begin{itemize}
\item[{\rm (a)}] There are two words $w,w'\in\langle X\rangle$ such that $w_{j}\lw_{\sigma}(g_{j})w'_{j}=w\lw_{\sigma}(g_{s})w'$.

\item[{\rm (b)}] The obstruction ${\rm o}_{i,s}(w_{i},w'_{i};w,w')$ is either an obstruction without overlap or a (proper) multiple of a non-trivial obstruction in $\bigcup_{1\leq i\leq s}{\rm NTObs}(i,s)$.

\item[{\rm (c)}] The obstruction ${\rm o}_{j,s}(w_{j},w'_{j};w,w')$ is either an obstruction without overlap or a (proper) multiple of a non-trivial obstruction in $\bigcup_{1\leq i\leq s}{\rm NTObs}(i,s)$.
\end{itemize}
\end{proposition2}

\begin{proof}
By Lemma~\ref{LemBk}, we can represent ${\rm o}_{i,j}(w_{i},w'_{i};w_{j},w'_{j})$ as
$${\rm o}_{i,j}(w_{i},w'_{i};w_{j},w'_{j})={\rm o}_{i,s}(w_{i},w'_{i};w,w')-{\rm o}_{j,s}(w_{j},w'_{j};w,w').$$
By Lemma~\ref{LemBk} and Proposition~\ref{BuCrit}, it suffices to show that the S-polynomials $S_{i,s}(w_{i},w'_{i};w,w')$
and $S_{j,s}(w_{j},w'_{j};w,w')$ have Gr\"obner representations in terms of $G$.
If ${\rm o}_{i,s}(w_{i},w'_{i};w,w')$ is an obstruction without overlap, then,
by Lemma~\ref{ProdCrit}, its S-polynomial has a Gr\"obner representations in terms of $G$.
If it is a multiple of a non-trivial obstruction in $\bigcup_{1\leq i\leq s}{\rm NTObs}(i,s)$,
then Lemma~\ref{ProdCrit} and Theorem~\ref{BuProc} ensure that $S_{i,s}(w_{i},w'_{i};w,w')$
has a Gr\"obner representations in terms of $G$. By the same argument, one can show that
$S_{j,s}(w_{j},w'_{j};w,w')$ has a Gr\"obner representations in terms of $G$.
\end{proof}

We would like to mention that the Non-Commutative Backward Criterion given in Proposition~\ref{ncBKCrit} covers in particular all useless obstructions presented by T. Mora in \cite{Mo94}, Lemma 5.11.

\begin{remark}
In order to apply Propositions~\ref{ncMCrit},~\ref{ncLWCrit},~\ref{ncTR} and~\ref{ncBKCrit} to remove unnecessary obstructions during the execution of the Buchberger Procedure, it is crucial to make sure that the S-polynomials of those removed obstructions have Gr\"obner representations.

\begin{itemize}
\item[(a)] Propositions~\ref{ncMCrit},~\ref{ncLWCrit} and~\ref{ncTR} remove unnecessary
non-trivial obstructions, say ${\rm o}_{i,s}(w_{i},w'_{i};w_{s},w'_{s})$, from the set
$\bigcup_{1\leq i\leq s}{\rm NTObs}(i,s)$ of newly constructed obstructions.
The Gr\"obner representation of the S-polynomial $S_{i,s}(w_{i},w'_{i};w_{s},w'_{s})$
depends on the Gr\"obner representations of the S-poly\-no\-mials of two smaller obstructions
in the set $\bigcup_{1\leq i\leq j\leq s-1}{\rm Obs}(i,j)$ and
the set $\bigcup_{1\leq i\leq s}{\rm Obs}(i,s)$.

\item[(b)] Proposition~\ref{ncBKCrit} removes unnecessary obstructions, say ${\rm o}_{i,j}(w_{i},w'_{i};w_{j},w'_{j})$, from the set $\bigcup_{1\leq i\leq j\leq s-1}{\rm Obs}(i,j)$ of previously constructed obstructions. The Gr\"obner representation of $S_{i,j}(w_{i},w'_{i};w_{j},w'_{j})$ depends on the Gr\"obner representations of the S-polynomials of two obstructions, say ${\rm o}_{k,s}(w_{k},w'_{k};$ $u_{s},u'_{s})$ and ${\rm o}_{l,s}(w_{l},w'_{l};v_{s},v'_{s})$, in $\bigcup_{1\leq i\leq s}{\rm Obs}(i,s)$, which are not necessarily smaller than ${\rm o}_{i,j}(w_{i},w'_{i};w_{j},w'_{j})$. If ${\rm o}_{k,s}(w_{k},w'_{k};u_{s},u'_{s})$ is a multiple of a non-trivial obstruction, say ${\rm o}_{k,s}(\tilde{w}_{k},\tilde{w}'_{k};\tilde{u}_{s},\tilde{u}'_{s})$, in $\bigcup_{1\leq i\leq s}{\rm NTObs}(i,s)$, then, before removing ${\rm o}_{i,j}(w_{i},w'_{i};w_{j},w'_{j})$, it is important to ensure that ${\rm o}_{k,s}(\tilde{w}_{k},\tilde{w}'_{k};\tilde{u}_{s},\tilde{u}'_{s})$ is in $\bigcup_{1\leq i\leq s}{\rm NTObs}(i,s)$. The same check should be applied to ${\rm o}_{l,s}(w_{l},w'_{l};v_{s},v'_{s})$.
\end{itemize}
Observe that Propositions~\ref{ncMCrit},~\ref{ncLWCrit} and~\ref{ncBKCrit} are actually generalizations of the well-known Gebauer-M\"oller criteria (see \cite{CKR04} and \cite{GM88}) in commutative polynomial rings. More precisely, Propositions~\ref{ncMCrit},~\ref{ncLWCrit} and~\ref{ncBKCrit} correspond to criterion $M$, criterion $F$ and criterion $B_k$, respectively (c.f. \cite{GM88}, Subsection 3.4).
\end{remark}

Using the Gebauer-M\"oller criteria, we can improve the Buchberger Procedure as follows.

\begin{theorem}[(Improved Buchberger Procedure)]\label{sec3the14}
In the setting of Theorem~\ref{BuProc}, we replace step {\rm (B4)} by the following sequence of instructions.
\begin{itemize}
\item[{\rm (4a)}] Increase $s$ by one. Append $g_{s}=S'$ to the set ${G}$, and form the set of non-trivial obstructions ${\rm NTObs}(s)=\bigcup_{1\leq i\leq s}{\rm NTObs}(i,s)$.

\item[{\rm (4b)}] Remove from ${\rm NTObs}(s)$ all obstructions ${\rm o}_{i,s}(w_{i},w'_{i};u_{s},u'_{s})$ such that there exists an obstruction ${\rm o}_{j,s}(w_{j},w'_{j};v_{s},v'_{s})\in{\rm NTObs}(s)$ with the properties that there exist two words $w,w'\in\langle X\rangle$ satisfying $u_{s}=wv_{s}$, $u'_{s}=v'_{s}w'$ and $ww'\neq1$.

\item[{\rm (4c)}] Remove from ${\rm NTObs}(s)$ all obstructions ${\rm o}_{i,s}(w_{i},w'_{i};u_{s},u'_{s})$ such that there exists an obstruction ${\rm o}_{j,s}(w_{j},w'_{j};v_{s},v'_{s})\in{\rm NTObs}(s)$ with the properties that there exist two words $w,w'\in\langle X\rangle$ satisfying $u_{s}=wv_{s}$, $u'_{s}=v'_{s}w'$, and such that $i>j$, or $i=j$ and $ww'=1$  and $w_{i}>_{\sigma}w_{j}$.


\item[{\rm (4d)}] Remove from $B$ all obstructions ${\rm o}_{i,j}(w_{i},w'_{i};w_{j},w'_{j})$ such that there exist two words $w,w'\in\langle X\rangle$ satisfying $w\lw_{\sigma}(g_{s})w'=w_{j}\lw_{\sigma}(g_{j})w'_{j}$, and such that the following conditions are satisfied.
\begin{itemize}
\item[{\rm (i)}] The obstruction ${\rm o}_{i,s}(w_{i},w'_{i};w,w')$ is either an obstruction without overlap or a (proper) multiple of a non-trivial obstruction in ${\rm NTObs}(s)$.

\item[{\rm (ii)}] The obstruction ${\rm o}_{j,s}(w_{j},w'_{j};w,w')$ is either an obstruction without overlap or a (proper) multiple of a non-trivial obstruction in ${\rm NTObs}(s)$.
\end{itemize}

\item[{\rm (4e)}] Replace $B$ by $B\cup {\rm NTObs}(s)$ and continue with step {\rm (B2)}.
\end{itemize}
Then the resulting set of instructions is a procedure that enumerates a ${\sigma}$-Gr\"obner basis ${G}$ of $I$. If $I$ has a finite ${\sigma}$-Gr\"ober basis, it stops after finitely many steps and the resulting set ${G}$ is a finite ${\sigma}$-Gr\"obner basis of $I$.
\end{theorem}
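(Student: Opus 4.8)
The plan is to show that the Improved Buchberger Procedure is still correct, i.e.\ that it enumerates a $\sigma$-Gr\"obner basis of $I$, by reducing everything to the original Buchberger Procedure~\ref{sec2the11} together with the four criteria already proved. The key observation is that the procedure only differs from the original one in \emph{which} obstructions get their S-polynomials passed to the Division Algorithm in step (B3); it never changes $G$ in any other way, and it never adds obstructions that were not already in some ${\rm NTObs}(i,j)$. So correctness will follow from the Buchberger Criterion (Proposition~\ref{sec2pro10}) once we verify that every non-trivial obstruction that is \emph{removed} by one of the steps (4b)--(4e) nonetheless has an S-polynomial with a Gr\"obner representation in terms of the final set $G$.

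The heart of the argument is an induction on the well-ordering $\tau$ on the set of all non-trivial obstructions ever constructed. First I would set up the right induction hypothesis: for a non-trivial obstruction ${\rm o}$, ``good'' means $S_{\rm o}$ has a Gr\"obner representation in terms of $G$. For obstructions that actually survive all the removal steps, their S-polynomials are reduced to $0$ in step (B3) by a fair strategy, so by Theorem~\ref{sec2the3}(a) they are good. For obstructions removed in step (4b) or (4c), Lemma~\ref{sec3lem3} (via Propositions~\ref{ncMCrit} and~\ref{ncLWCrit}) expresses ${\rm o}_{i,s}(w_i,w'_i;u_s,u'_s)$ as a $\tau$-smaller combination of ${\rm o}_{j,s}(w_j,w'_j;v_s,v'_s)$ and ${\rm o}_{k,l}(w_k,w'_k;w_l,w'_l)$, both strictly $\tau$-smaller; the first is in ${\rm NTObs}(s)$ and is good by induction, while the second is either an obstruction without overlap (good by Lemma~\ref{sec2lem8}) or a multiple of a non-trivial obstruction in $\bigcup_{i\le j\le s-1}{\rm NTObs}(i,j)$ (good by Lemma~\ref{sec2lem6} plus induction), so ${\rm o}$ itself is good. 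Step (4d) is handled the same way using Lemma~\ref{sec3lem7}, noting that the hypothesis ``${\rm o}_{i,s}(ww_i,w'_iw';w_s,w'_s)$ has no overlap'' makes its S-polynomial good by Lemma~\ref{sec2lem8} directly, so that the induction on $\tau$ goes through cleanly.

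The delicate step is (4e), the Non-Commutative Backward Criterion, because — as Example~\ref{sec3exa11} and Remark~\ref{sec3rem13}(b) warn — Lemma~\ref{sec3lem10} represents the removed obstruction ${\rm o}_{i,j}(w_i,w'_i;w_j,w'_j)$ in terms of two obstructions ${\rm o}_{i,s}(w_i,w'_i;w,w')$ and ${\rm o}_{j,s}(w_j,w'_j;w,w')$ that need \emph{not} be $\tau$-smaller than ${\rm o}_{i,j}$. So a naive induction on $\tau$ over all obstructions breaks here. The fix is to stratify the induction by the stage of the procedure: when $g_s$ is appended, all obstructions in $\bigcup_{i\le j\le s-1}{\rm NTObs}(i,j)$ that get removed by (4e) at this stage are represented via obstructions living in $\bigcup_{i\le s}{\rm NTObs}(i,s)$, i.e.\ obstructions created \emph{at stage $s$}, and by conditions (i), (ii) of (4e) each of these is either without overlap (good by Lemma~\ref{sec2lem8}) or a multiple of a non-trivial obstruction that is \emph{still present} in ${\rm NTObs}(s)$ after (4b)--(4d) — this is exactly the check flagged in Remark~\ref{sec3rem13}(b) — and such a surviving stage-$s$ obstruction is good because it will eventually be selected and reduced to $0$ (or is itself removed, in which case we appeal to the part of the induction already handled for (4b)--(4d), whose witnesses are $\tau$-smaller). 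Thus I would run the induction on a well-founded order that is lexicographic: first on the stage $s$ at which an obstruction is \emph{finally removed or reduced}, and within a fixed stage on $\tau$; the backward criterion moves weight from stage $<s$ to stage $s$, which is permissible since conditions (i), (ii) guarantee the stage-$s$ witnesses are good by the time (4e) is invoked. Having established that every constructed non-trivial obstruction is good, Proposition~\ref{sec2pro10} gives that the output $G$ is a $\sigma$-Gr\"obner basis; the termination claim is unchanged since $G$ evolves exactly as in Theorem~\ref{sec2the11} whenever a Gr\"obner basis is finite, removals only shrinking the work queue.

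I expect the main obstacle to be making the two-level induction in the previous paragraph airtight: one must be careful that the stage-$s$ obstructions invoked as witnesses in step (4e) have indeed already been shown good \emph{without circular reference back to} ${\rm o}_{i,j}$ or to other stage-$<s$ obstructions still awaiting processing. The cleanest way is probably to prove the stronger statement ``at the moment the procedure finishes processing stage $s$, every non-trivial obstruction in $\bigcup_{i\le j\le s}{\rm NTObs}(i,j)$ is good'' by induction on $s$, where the inner argument at stage $s$ is itself an induction on $\tau$ over ${\rm NTObs}(s)$, and the removals in (4e) are justified last, after all of ${\rm NTObs}(s)$ has been certified good.
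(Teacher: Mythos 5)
Your overall plan matches the paper's: the paper's proof of Theorem~\ref{sec3the14} is literally a one-line citation of Theorem~\ref{sec2the11} together with Propositions~\ref{ncMCrit}, \ref{ncLWCrit}, \ref{ncTailRed} and~\ref{ncBKCrit}, so reducing correctness to ``every removed obstruction still admits a Gr\"obner representation'' and then invoking the Buchberger Criterion is exactly the intended argument, and your handling of steps (4b)--(4d) by $\tau$-induction via Lemmas~\ref{sec3lem3} and~\ref{sec3lem7} is the right mechanism that the paper's proofs of those propositions supply.

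The one place where your write-up is not yet airtight is the refined induction hypothesis you propose for step~(4e). The statement ``at the moment the procedure finishes processing stage $s$, every non-trivial obstruction in $\bigcup_{i\le j\le s}{\rm NTObs}(i,j)$ is good'' is not a well-posed invariant: the procedure does not ``finish'' a stage --- after (4f) it returns to (B2), where it may pick obstructions from earlier stages or leave stage-$s$ obstructions untouched for a long time. Moreover, the stage-$s$ witnesses that (4e) appeals to have \emph{not} been certified good when (4e) runs; they are merely still present in ${\rm NTObs}(s)$ (which is the check stressed in Remark~\ref{sec3rem13}(b)), and they may themselves later be removed by (4e) at some stage $s'>s$. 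So ``good by the time (4e) is invoked'' is circular as written. A correct formulation should instead argue on the final run: in the terminating case, set up a well-founded order on removed obstructions in which (4b)--(4d) steps strictly decrease $\tau$ while (4e) steps strictly increase the current stage index, and observe that any dependency chain has only finitely many (4e)-steps (the stage is bounded by the last one) with $\tau$-decreasing runs in between, hence terminates at an obstruction that is either overlap-free (Lemma~\ref{sec2lem8}) or actually selected and reduced to zero. This fixes the bookkeeping without changing the essential strategy you already laid out, and it is the detail the paper leaves implicit behind its one-sentence proof.
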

\begin{proof}
This follows from Theorem~\ref{BuProc} and Propositions~\ref{ncMCrit},~\ref{ncLWCrit} and~\ref{ncBKCrit}.
\end{proof}

\section{Experiments and Conclusions}
\label{sec4}

In this section we want to present some experimental data which illustrate the performance of the Gebauer-M\"oller criteria presented in Propositions~\ref{ncMCrit},~\ref{ncLWCrit} and~\ref{ncBKCrit}. The computations are based on an implementation (using C++) in an experimental version of the ApCoCoA library (see \cite{Ap10}) by the second author.

\begin{example}\label{sec4exa1}
Consider the non-commutative polynomial ring $\mathbb{Q}\langle a,b\rangle$ equipped with the word ordering $\LLex$ on $\langle a,b\rangle$ such that $a>_{\LLex}b$. We take the list of \emph{finite generalized triangle groups} from \cite{RS02}, Theorem 2.12 and construct a list of ideals in $\mathbb{Q}\langle a,b\rangle$. For $k=1,\dots,13$ let $I_{k}=\langle G_{k}\rangle\subseteq\mathbb{Q}\langle a,b\rangle$ be the ideal generated by the following set of polynomials $G_{k}\subseteq\mathbb{Q}\langle a,b\rangle$.
\begin{eqnarray*}
G_{1}&=&\{a^2-1,b^3-1,(ababab^2ab^2)^2-1\},\\
G_{2}&=&\{a^2-1,b^3-1,(ababab^2)^3-1\},\\
G_{3}&=&\{a^3-1,b^3-1,(abab^2)^2-1\},\\
G_{4}&=&\{a^3-1,b^3-1,(aba^2b^2)^2-1\},\\
G_{5}&=&\{a^2-1,b^5-1,(abab^2)^2-1\},\\
G_{6}&=&\{a^2-1,b^5-1,(ababab^4)^2-1\}, \\
G_{7}&=&\{a^2-1,b^5-1,(abab^2ab^4)^2-1\},\\
G_{8}&=&\{a^2-1,b^4-1,(ababab^3)^2-1\},\\
G_{9}&=&\{a^2-1,b^3-1,(abab^2)^2-1\}, \\
G_{10}&=&\{a^2-1,b^3-1,(ababab^2)^2-1\},\\
G_{11}&=&\{a^2-1,b^3-1,(abababab^2)^2-1\},\\
G_{12}&=&\{a^2-1,b^3-1,(ababab^2abab^2)^2-1\},\\
G_{13}&=&\{a^2-1,b^3-1,(ababababab^2ab^2)^2-1\}.
\end{eqnarray*}
The following table lists some numbers of polynomials and obstructions treated by the Improved Buchberger Procedure given in Theorem~\ref{sec3the14}.
\begin{center}
\begin{tabular}{rccccccccc}\hline
$k$ & $\#(Gb)$ & $\!\!\#(RGb)\!\!$ & $\#(Tot)$ & $\#(Sel)$ & $\#(M)$ & $\#(F)$ & $\#(B_k)$ & $\rho$\\\hline
1 & 62 & 35 & 7032 & 248 & 6512 & 48 & 224 & 0.0353\\
2 & 133 & 96 & 31700 & 533 & 30571 & 70 & 526 & 0.0168\\
3 & 50 & 40 & 2828 & 197 & 2489 & 11 & 131 & 0.0697\\
4 & 64 & 28 & 4702 & 253 & 4185 & 46 & 218 & 0.0538\\
5 & 35 & 21 & 1580 & 115 & 1348 & 24 & 93 & 0.0728\\
6 & 199 & 164 & 51175 & 882 & 49126 & 26 & 1141 & 0.0172\\
7 & 200 & 164 & 51864 & 886 & 49818 & 17 & 1143 & 0.0171\\
8 & 53 & 37 & 3756 & 192 & 3357 & 19 & 188 & 0.0511\\
9 & 11 & 5 & 150 & 31 & 98 & 8 & 13 & 0.2067\\
10 & 22 & 15 & 741 & 74 & 605 & 18 & 44 & 0.0999\\
11 & 30 & 21 & 1573 & 116 & 1324 & 50 & 83 & 0.0737\\
12 & 97 & 70 & 16841 & 365 & 15989 & 97 & 390 & 0.0217\\
13 & 220 & 194 & 87673 & 1021 & 85136 & 153 & 1363 & 0.0116\\\hline
\end{tabular}
\end{center}
Here we used the following abbreviations.
\begin{itemize}
\item $\#(Gb)$ is the number of elements of the Gr\"obner basis returned by the procedure.

\item $\#(RGb)$ is the cardinality of the reduced Gr\"obner basis of the corresponding ideal.

\item $\#(Tot)$ is the total number of non-trivial obstructions constructed during the Buchberger Procedure.

\item $\#(Sel)$ is the number of actually selected and analysed non-trivial obstructions.

\item $\#(M)$ is the number of unnecessary non-trivial obstructions detected by the Non-Commutative Multiply Criterion given in Proposition~\ref{ncMCrit}.

\item $\#(F)$ is the number of unnecessary non-trivial obstructions detected by the Non-Commutative Leading Word Criterion given in Proposition~\ref{ncLWCrit}.

\item $\#(B_k)$ is the number of unnecessary non-trivial obstructions detected by the Non-Commutative Backward Criterion given in Proposition~\ref{ncBKCrit}.

\item $\rho=\#(Sel)/\#(Tot)$.
\end{itemize}
Note that $\#(RGb)$ is an invariant of the ideal which only depends on chosen word ordering. Other numbers in the table
rely also on the selection strategy. In our experiments we used the \emph{normal strategy} which first chooses the
obstruction whose S-polynomial has the lowest degree and then breaks ties by choosing the obstruction whose S-polynomial
has the smallest leading word with respect to the word ordering.
The low ratios $\rho$ in the table indicate that the non-commutative Gebauer-M\"oller criteria we obtained can detect most unnecessary obstructions
during the procedure.
\end{example}

\begin{example}\label{sec4exa2}
The following ideals {\tt braid3} and {\tt braid4} in the non-commutative polynomial ring
$\mathbb{Q}\langle x_{1},x_{2},x_{3}\rangle$ are taken from \cite{SL09}, Section~5.
More precisely, {\tt braid3} is the ideal generated by the set $\{-x_{2}x_{3}x_{1}+x_{3}x_{1}x_{3},$ $x_{2}x_{1}x_{2}-x_{3}x_{2}x_{3}$, $x_{1}x_{2}x_{1}-x_{3}x_{1}x_{2}, x_{1}^3+x_{1}x_{2}x_{3}+x_{2}^3+x_{3}^3\}$, and {\tt braid4} is the ideal generated by the set $\{-x_{2}x_{3}x_{1}+x_{3}x_{1}x_{3}, x_{2}x_{1}x_{2}-x_{3}x_{2}x_{3}, x_{1}x_{2}x_{3}-x_{3}x_{1}x_{2}, x_{1}^3+x_{1}x_{2}x_{3}+x_{2}^3+x_{3}^3\}$. These ideals are generated by sets of homogeneous generators. The following table lists the results of the computations of Gr\"obner bases truncated at degree~$11$ with respect to $\LLex$ on $\langle x_{1},x_{2},x_{3}\rangle$ such that $x_{1}>_{\LLex} x_{2}>_{\LLex}x_{3}$, via the Improved Buchberger Procedure.

\begin{center}
\begin{tabular}{lcccc}
\hline
 & $\#(Gb)$ & $\#(Tot)$ & $\#(Sel)$ & $\rho$\\\hline
braid3-11 & 729 & 292630 & 1663 & 0.0057\\
braid4-11 & 417 & 93823 & 1150 & 0.0123 \\\hline
\end{tabular}
\end{center}

The meaning of the symbols is the same as in Example~\ref{sec4exa1}.
In this experiment we also used the normal strategy. Moreover, since we compute truncated Gr\"obner bases,
we discard those obstructions whose S-polynomial have degrees larger than the degree of truncation.
Thus the ratios $\rho$ in the table are lower than the ratios in the table of Example~\ref{sec4exa1}.
Again, the non-commutative Gebauer-M\"oller criteria detect most unnecessary obstructions during the procedure.
\end{example}

The experimental data in Examples~\ref{sec4exa1} and~\ref{sec4exa2} show that the generalizations of the Gebauer-M\"oller criteria presented in Propositions~\ref{ncMCrit},~\ref{ncLWCrit} and~\ref{ncBKCrit} can successfully detect a large number of unnecessary obstructions. In fact, they apparently detect almost all unnecessary obstructions during the Buchberger Procedure.

\subsection*{Acknowledgements}

The second author is grateful to the Chinese Scholarship Council (CSC) for providing partial financial support. Both authors thank G. Studzinski for valuable discussions about non-commutative Gr\"obner bases. And both authors appreciate anonymous referees for  careful reading and useful suggestions.

\end{document}